\documentclass[12pt]{amsart}
\usepackage{amsfonts, amstext, amsmath, amsthm, amscd, amssymb, float, longtable, fullpage}
\usepackage{epsfig, graphics, psfrag}
\usepackage[hidelinks]{hyperref}
\usepackage{graphicx}
\usepackage{color}
\usepackage[font=small, labelfont=bf]{caption}
\usepackage{float}

\theoremstyle{plain} 
\newtheorem{Theorem}{Theorem}[section]
\newtheorem{Corollary}[Theorem]{Corollary}

\theoremstyle{definition} 
\newtheorem{Definition}[Theorem]{Definition}

\newcommand{\RR}{\mathbb{R}}
\newcommand{\ZZ}{\mathbb{Z}}

\usepackage{titlesec}
\titleformat{\section} {\normalfont\scshape\bfseries\filcenter}{\thesection }{1em}{}
\titleformat{\subsection} {\normalfont\scshape\bfseries\filcenter}{\thesubsection}{1em}{}

\newenvironment{customthm}[1]
{\innercustomthm}
{\endinnercustomthm}

\setlength {\marginparwidth }{2cm}

\title{Legendrian Knots and Multi-Crossings}
\author{Amit Kumar, Jake Murphy, Brian Naff}

\begin{document}

\begin{abstract}

It was shown in~\cite{adams2015knot} that any smooth knot can be represented by an \"ubercrossing projection, i.e. a knot projection with no crossings aside from a single multi-crossing. We extend this idea to Legendrian knots and investigate \"ubercrossing and petal projections in the front and Lagrangian projections. We show that any Legendrian knot with an \"ubercrossing projection in the front projection is smoothly isotopic to the unknot and we demonstrate how to compute the $tb$ and rotation numbers for petal projections in the Lagrangian projection.
    
\end{abstract}

\maketitle

\section{Introduction}

Recently, the authors of \cite{adams2015knot} created an algorithm which, given any smooth knot diagram, produces a projection of the knot with a single multi-crossing. This projection is called an \textit{\"ubercrossing projection}. As opposed to classical knot diagrams, where crossings are points where exactly twos strands cross, a \textit{multi-crossing} is a point in the projection where two or more strands cross. We call a multi-crossing where $n$ strands cross an $n$-\textit{crossing}. They also showed that these \"ubercrossing projections can be refined into an \"ubercrossing projection without any nested loops, called a petal projection. They used these projections to created two new knot invariants using these projections, namely the \textit{\"ubecrossing number} $\ddot{u}(K)$ and the \textit{petal number} $p(K)$. These are the minimum number of loops in any \"ubercrossing or petal projection of $K$, respectively. 

In \cite{MR3334663}, \cite{MR4523300}, \cite{MR4462153}, and \cite{MR3868230}, bounds for these invariants were given in terms of well-known invariants, such as the crossing number and knot determinant. Values for the petal number of certain classes of torus knots were computed in \cite{MR4523300} and \cite{MR4641892}. The authors of \cite{MR4012239} created Reidemeister-like moves which determine when two petal projections are equivalent. The Adams, Hoste, and Palmer did this for knot projections where every crossing is a 3-crossing in \cite{MR4038326}. Adams and Kehne use multi-crossing projections to give upper bounds on the hyperbolic volume of a link in \cite{MR4124135} and the authors of \cite{MR3733244} investigate the Kauffman bracket for these projections. Finally, the idea of petal projections was extended to virtual links and it was shown that any virtual link can be represented by a petal projection in \cite{MR4641887}.

The goal of this paper is to apply the concepts of \"ubercrossing and petal projections to Legendrain knots. One major difference between \"ubercrossing projections of smooth knots and of Legendrian knots is that non-isotopic smooth knots can have the same \"ubercrossing projection, while a Legendrian \"ubercrossing projection in the front of Lagrangian projection will uniquely determine a Legendrian knot, since we are able to recover the $y$-coordinates of a knot in the front projection and the relative $z$-coordinates in the Lagrangian projection. We show that any \"ubercrossing projection of a Legendrian knot in the front projection is smoothly isotopic to the unknot and that any smooth knot. For the front projection, we prove the following:

\begin{customthm}{A}
\textit{If $K$ is a Legendrian knot with an \"ubercrossing projection in the front projection, then $K$ is smoothly isotopic to the unknot.}
\end{customthm}

We introduce the idea of a petal projection in the Lagrangian projection, called a Lagrangian petal projection, and have shown the following:

\begin{customthm}{B}
\textit{For  a smooth knot K there exists a Legendrian knot $\Lambda$ with smooth knot type $K$ such that $\Lambda$ has a Lagrangian petal projection. }   
\end{customthm}

We provide formulas for computing the $tb$ and the rotation number for these Lagrangian petal projections. Finally, we demonstrate a family of Legendrian knots with petal projections in the Lagrangian projection with strictly increasing $tb$.

\subsection{Acknowledgements}

The authors would like to thank Shea Vela-Vick and Angela Wu for their help in this project and their advice in writing this paper. This material is based upon work supported by the National Science Foundation under Award No.~1812061 and Award No.~2231492.
\section{Background}

In this section, we introduce multi-crossing knot projections, \"ubercrossing projections, and petal projections. Then, we will define Legendrian knots and cover some of their classical invariants, namely the rotation number and Thurston-Bennequin number.

\subsection{Multi-crossing Projections}

The idea of a multi-crossing was introduced in \cite{adams2013triple} to generalize typical 2-strand crossings in knot projections.

\begin{Definition}

    An $n$-\textit{multi-crossing} of a knot projection is a point where $n$ strands intersect such that each strand bisects the crossing.
    
\end{Definition}

These multi-crossings were used to define \"ubercrossing projections and petal projections in \cite{adams2015knot}.
\begin{Definition}

An \textit{\"ubercrossing projection} is a knot projection with a single multi-crossing. A \textit{petal projection} is an \"ubercrossing projection which does not contain any nested loops.
    
\end{Definition}

\begin{Theorem}[\cite{adams2015knot}]
    Every smooth knot has an \"ubercrossing projection.
\end{Theorem}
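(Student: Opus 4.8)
The plan is to take an arbitrary regular projection of $K$ and consolidate all of its ordinary crossings into a single point, arranging the intervening arcs as non-self-intersecting loops emanating from that point. Throughout, the only moves used will be planar isotopy of the diagram (together with the obvious lift to an isotopy of $K$ in $\mathbb{R}^3$ that preserves over/under data), so the final object will be a projection of the \emph{same} knot.

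First I would start with any regular projection $D$ of $K$ whose only singularities are transverse double points $c_1,\dots,c_k$. Fix a point $p$ in the plane and a small round disk $D_0$ centered at $p$ and disjoint from $D$. The heart of the argument is to relocate every crossing into $D_0$. For a single crossing $c_i$, choose a disk neighborhood $B_i$ meeting $D$ in just the transverse pair of arcs through $c_i$, together with a thin embedded tube $T_i$ running from $B_i$ to $D_0$ that avoids all other crossings and all other tubes. Sliding $B_i$ into $D_0$ along $T_i$ is an ambient isotopy of the plane; carrying this out for all crossings along pairwise disjoint tubes pushes every crossing inside $D_0$ while dragging the complementary arcs of $D$ along, keeping them mutually disjoint and crossing-free outside $D_0$.

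Once all crossings lie in $D_0$, the part of $D$ inside $D_0$ is a union of short transverse arc-pairs, and the part outside is a union of embedded arcs with endpoints on $\partial D_0$. I would then apply a further isotopy supported in $D_0$ that straightens every interior strand to a chord through $p$, meeting $p$ transversally so that each strand bisects it. This makes $p$ an $n$-crossing, where $n$ is the number of strands meeting $\partial D_0$ on each side, and the outside arcs become the loops of the projection; retaining the original over/under information at $p$ yields a single-multi-crossing diagram of $K$, as desired.

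The main obstacle I anticipate is the simultaneous relocation step: one must choose the tubes $T_i$ pairwise disjoint and disjoint from the rest of the diagram, and verify that the exterior arcs stay embedded and crossing-free throughout, so that $p$ is genuinely the \emph{only} multiple point at the end. I would handle this by an induction that relocates one crossing at a time, each step re-selecting a clear tube in the complement of the portion already assembled, and by checking that the final straightening to bisecting chords introduces no tangencies. Everything outside this controlled-isotopy bookkeeping is routine.
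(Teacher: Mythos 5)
There is a genuine gap, and it sits exactly where you flagged your ``main obstacle'': the final straightening step inside $D_0$ is not an isotopy, and no amount of bookkeeping with tubes can make it one. At an $n$-crossing every strand bisects the crossing, so every pair of the $n$ strands crosses there exactly once; an $n$-crossing therefore contains $\binom{n}{2}$ pairwise crossings. Your diagram entering the straightening step has only $k$ crossings, consisting of $k$ disjoint transverse arc-pairs inside $D_0$: strands coming from different original crossings $c_i$ and $c_j$ are disjoint, and because they enter $D_0$ through disjoint tubes, their endpoint pairs are \emph{unlinked} on $\partial D_0$. For an isotopy supported in $D_0$ (hence fixing $\partial D_0$), the mod-2 intersection number of two properly embedded arcs is determined by whether their endpoints alternate around $\partial D_0$; unlinked endpoint pairs force an even number of intersections, so these strands can never be isotoped rel boundary to chords that cross once at $p$. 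Equivalently: your stated ground rule, that ``the only moves used will be planar isotopy,'' already dooms the construction, since an ambient isotopy of the plane preserves the singularity structure of the projection exactly --- $k$ double points cannot become one $n$-fold point. The phrase ``retaining the original over/under information at $p$'' also has no content for the many pairwise crossings at $p$ that did not exist in the original diagram.

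What is missing is a genuinely $3$-dimensional move that creates the needed crossings in controlled over/under position. This is the heart of the construction in~\cite{adams2015knot} (the present paper only cites the result, so there is no in-paper proof to compare against): there, one merges two multi-crossings joined by an edge of the diagram by lifting a disk around one crossing into $3$-space and folding it over onto the other, so that all strands of the first crossing pass entirely \emph{above} all strands of the second near the merged point. This is an isotopy of the knot in $\mathbb{R}^3$ (not of the plane), it creates exactly one new crossing between each strand of the first group and each strand of the second, and every strand still bisects the resulting multi-crossing; iterating merges all crossings into a single übercrossing. If you want to salvage your write-up, the tube-sliding phase can stay, but the consolidation must be rebuilt around such a folding (or stacking/spiraling) move, with the over/under data at the übercrossing recorded as a height ordering of the strands rather than inherited pointwise from the old diagram.
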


Any \"ubercrossing projection can be isotoped into a petal projection, which gives the following corollary.

\begin{Corollary}[\cite{adams2015knot}]
    Every knot has a petal projection.
\end{Corollary}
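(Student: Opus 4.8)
The plan is to obtain the corollary as a direct consequence of the preceding theorem, so the only genuine content is to exhibit an isotopy of the projection that eliminates all nested loops while retaining a single multi-crossing. First I would fix an übercrossing projection $P$ of the knot $K$, with an $n$-crossing at a point $c$, and record the loops $\ell_1,\dots,\ell_n$ emanating from $c$, that is, the arcs of the diagram between consecutive passages through $c$. Call a loop $\ell_i$ \emph{nested} if the bounded region it cuts off from the plane contains some other part of the diagram; the objective is an isotopic projection in which no such containment occurs, which is by definition a petal projection.

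Next I would set up an induction on a complexity measure counting the containment relations among the loops. The key geometric move is to choose an \emph{innermost} nested loop $\ell_j$, namely one whose bounded region meets no other strand of the diagram. Because the interior disk of $\ell_j$ is free of the rest of the projection, $\ell_j$ can be contracted through that disk down to the crossing point $c$ without crossing any strand, and then re-expanded into the unbounded region lying outside all the loops that previously contained it; near $c$ this amounts to changing the cyclic order in which the two arcs of $\ell_j$ leave the multi-crossing. This is an ambient isotopy of the plane, so it preserves the smooth knot type, and it keeps exactly one multi-crossing at $c$.

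The main obstacle I anticipate is verifying that this move always strictly reduces the complexity, so that the procedure terminates in a projection with no nesting at all. Here I would use that containment of loops is a forest-like partial order, so an innermost nested loop exists whenever any nesting remains. Pulling such an $\ell_j$ out removes every one of the (at least one) relations in which it was the inner loop, and introduces no new relations, since $\ell_j$ is routed into a strand-free region and no other loop is moved; hence the measure drops by at least one. The delicate point, and the reason the innermost hypothesis is essential, is confirming that the contraction-and-redeployment can genuinely be realized as an isotopy retaining a single multi-crossing — this is exactly what the empty-interior condition guarantees. Iterating until the measure reaches zero yields an übercrossing projection of $K$ with no nested loops, completing the proof.
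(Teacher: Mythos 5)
Your overall strategy (induct on the amount of nesting, locate an innermost nested loop, remove it, repeat) matches the structure of the argument in~\cite{adams2015knot} that this paper simply cites; the paper itself offers no proof beyond the remark that any \"ubercrossing projection can be isotoped into a petal projection, so filling in that step is exactly the right target. Your bookkeeping is also fine: since the loops of an \"ubercrossing projection meet only at the multi-crossing, containment is a forest-like partial order, an innermost nested loop exists whenever nesting remains, and its bounded disk is disjoint from the rest of the diagram.

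However, the central move has a genuine gap, and as stated it is false. You claim that contracting the innermost loop $\ell_j$ to the crossing and ``re-expanding'' it into the unbounded region ``is an ambient isotopy of the plane.'' No ambient isotopy of the plane can do this: if $\ell_j$ lies in the bounded disk $D_k$ of a loop $\ell_k$, then any plane homeomorphism isotopic to the identity carries $D_k$ to the bounded complementary disk of the image of $\ell_k$, so nesting relations are preserved; likewise the cyclic order of the strand-ends around the multi-crossing is preserved. Your own description is internally inconsistent on this point, since you say the move ``changes the cyclic order in which the two arcs of $\ell_j$ leave the multi-crossing'' --- precisely the kind of change a planar isotopy cannot effect. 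What the argument actually requires is a three-dimensional isotopy of the knot: one uses the fact that the $n$ strands pass through the multi-crossing at distinct heights, so the shrunken innermost loop can be passed \emph{over or under} the strands of the loops containing it (with the over/under choice dictated by those heights), swept into an unnested position, and dropped back down. During this isotopy the projection temporarily acquires extra crossings, so one must then verify three things you never address: that the knot type is unchanged, that the final diagram is again a single multi-crossing at which every strand bisects the crossing (note that rotating the rays of $\ell_j$ at $c$ drags the opposite rays of the same two strands, hence the neighboring loops, so this is not automatic), and that the nesting complexity has strictly decreased. The empty-interior condition you invoke makes the contraction possible, but it does nothing to let the loop escape its nest within the plane; the escape is where the actual content of the cited lemma lives, and it is missing from your proof.
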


\subsection{Legendrian Knots} We will give a brief introduction to Legendrian knots. Interested readers should consult~\cite{etnyre2005legendrian} or~\cite{geiges2008introduction} for a comprehensive introduction to the topic.

\begin{Definition}

 The \textit{standard contact structure} on $\mathbb{R}^3$ with coordinates $(x,y,z)$, denoted $(\mathbb{R}^3, \xi_{std})$, is the plane field defined by $\xi_{std} = \text{ker}(dz - ydx)$. 
    
\end{Definition}

Figure~\ref{fig:my_label} shows the standard contact structure on $\mathbb{R}^3$. We use the standard contact structure to define Legendrian knots.
\vspace{2mm}
\begin{figure}
    \centering
    \includegraphics[scale = 0.5]{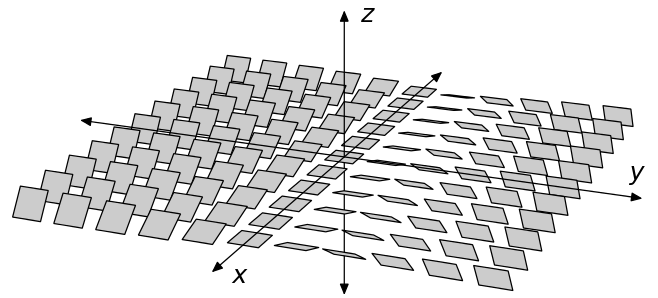}
    \caption{The standard contact structure, $(\mathbb{R}^3, \xi_{std})$. Source:~\cite{pict}.}
    \label{fig:my_label}
\end{figure}

\begin{Definition}
A \textit{Legendrian knot} $\Lambda$ is a knot, such that the tangent vector at each point $p$ of $\Lambda$ is contained in the plane of $\xi_{\text{std}}$ at $p$.
\end{Definition}

Note that if $\Lambda: S^1 \to \mathbb{R}^3$ is a Legendrian knot in $(\mathbb{R}^3, \xi_{std})$ given by $$\Lambda: t \mapsto (x(t), y(t), z(t)),$$ then our previous conditions imply
$$z'(t)-y(t)x'(t) = 0. $$

\begin{Definition}
    
Two Legendrian knots $\Lambda_0$ and $\Lambda_1$ are called \textit{Legendrian isotopic} if there is a smooth map $\kappa : S^1 \times [0, 1] \to \mathbb{R}^3$ such that $\kappa(S^1,0) = \Lambda_0$, $\kappa(S^1,1) = \Lambda_1$, and $\kappa(S^1,s)$ is a Legendrian knot for every $s \in [0,1]$.

\end{Definition}

We typically consider two types of projections of Legendrian knots, namely the projection to the $xz$-plane, called the \textit{front projection}, and projection to the $xy$-plane, called the \textit{Lagrangian projection}.

For the front projection, let $\Pi$ be the projection onto the $xz$-plane. For a Legendrian knot $\Lambda$, we can view the front projection of $\Lambda$ as a map $\Pi(\Lambda):S^1\to\RR^2$ which sends $t\in S^1$ to $(x(t),z(t))$. Since $\Lambda$ satisfies the equation $z'(t) -y(t)x'(t) = 0 $, we can recover the $y$-coordinate of a point in the front projection by looking at the slope of that point, since $$y(t) = \frac{z'(t)}{x'(t)}.$$

We can similarly recover information about the $z$-coordinate of a Legendrian knot $\Lambda$ from the Lagrangian projection by using the equation $z'(t)- y(t)x'(t) = 0$. To accomplish this, we integrate $y(t)x'(t)$ along the Lagrangian projection of $\Lambda$ to get \[z(t_2)-z(t_1)=\int_{t_1}^{t_2} y(t)x'(t)dt.\]
Note that this allows us to recover the $z$-coordinate up to translations of $\Lambda$ along the $z$-axis.

Lastly, we will cover two invariants of Legendrian knots and how to compute them in the Lagrangian projection, namely the rotation number and the Thurston-Bennequin number.

\begin{Definition}
    For a Legendrian knot $\Lambda$ in the Lagrangian projection, the \textit{Thurston-Bennequin number} of $\Lambda$ is given by $tb(\Lambda)=\text{writhe}(\Lambda)$. The \textit{rotation number} of $\Lambda$ is the winding number of $\Lambda$ in the Lagrangian projection.
\end{Definition}

\section{\"Ubercrossing Projections in the Front Projection}

In this section, we first investigate which Legendrian knots can be presented with an \"ubercrossing projection in the front projection.

\begin{Theorem}
If $K$ is a Legendrian knot with an \"ubercrossing projection in the front projection, then $K$ is smoothly isotopic to the unknot.
\end{Theorem}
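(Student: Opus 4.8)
The plan is to exploit the defining relation $y(t) = z'(t)/x'(t)$ to convert the crossing data of the front projection into purely geometric data about the slopes of the strands, and then to unknot the diagram one strand at a time. First I would fix the local picture at the unique multi-crossing $p = (x_0,z_0)$: if $n$ strands pass through $p$, each does so transversally with a well-defined slope, and these slopes $s_1 < s_2 < \cdots < s_n$ must be pairwise distinct, since otherwise two points of the knot would share all three coordinates. Because the $y$-coordinate of a strand at $p$ equals its slope, the vertical stacking of the strands in $\mathbb{R}^3$ — which determines all of the over/under information at $p$ — is governed by exactly the same ordering as the slopes.

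The key fact I would isolate as a lemma is a \emph{coherence} property special to front projections: since the tangent direction of the $i$-th strand at $p$ makes angle $\arctan(s_i)$ with the $x$-axis, the angular order in which the strands pass through $p$ agrees with their height order $s_1 < \cdots < s_n$. In an arbitrary smooth \"ubercrossing projection these two orderings are independent, which is precisely why such projections realize every knot; the front projection forces them to coincide, and this is the rigidity I expect to drive the whole argument.

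With coherence in hand I would argue by induction on $n$. The strand of maximal slope $s_n$ lies above every other strand at $p$ and simultaneously emanates at the extreme angle $\arctan(s_n)$, so it sits on the angular boundary of the fan of strands. Away from $p$ the projection has no crossings at all, so the arc carrying this strand is an embedded loop that is unobstructed except at $p$, where it is the topmost sheet. I would then isotope it upward in the $y$-direction and reroute it through the unbounded region of the plane, eliminating its passage through $p$ without introducing any new crossing and reconnecting its two neighbouring arcs. The result is a single-multi-crossing diagram of the same smooth knot with $n-1$ strands that still satisfies the coherence hypothesis, so the induction continues; the base case $n \le 2$ is an unknotted circle.

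The main obstacle, and the step deserving the most care, is making the inductive reduction fully rigorous: I must verify that the extremal strand can always be swept out to the outer region without passing over or under any other arc, and that the rerouting keeps $K$ connected while leaving the remaining strands in coherent position. This requires handling the cusps — where the slope, and hence $y$, remains finite and continuous — and the possibility of nested loops, so that the slogan ``extreme in both height and angle implies unobstructed'' is justified rather than merely plausible. Once that local-to-global sweeping argument is pinned down, the coherence property does the remaining work and the conclusion that $K$ is smoothly the unknot follows.
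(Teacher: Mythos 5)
Your opening observation---that in a front projection the $y$-coordinate, and hence all of the over/under data at the multi-crossing, is forced to agree with the slope ordering---is exactly the rigidity the paper exploits; the paper phrases it by saying the $n$-strand multi-crossing can be spread out into a half-twist on $n$ strands. The gap is in your inductive step. The claim that the maximal-slope strand ``can always be swept out to the outer region without passing over or under any other arc,'' i.e.\ without creating new crossings, is not merely in need of care: it is false once nested loops occur. Concretely, take $n=5$ strands with slopes $s_1<\cdots<s_5$, label the two ends of strand $i$ at the crossing by $i$ and $i+5$, so the cyclic angular order of the ends is $1,2,\ldots,10$, and join them by crossingless loops $\{5,6\}$, $\{4,7\}$ (nested around $\{5,6\}$), $\{9,10\}$, $\{1,8\}$ (nested around $\{9,10\}$), and $\{2,3\}$. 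This is a connected Legendrian \"ubercrossing front (the knot traverses $1\to6\to5\to10\to9\to4\to7\to2\to3\to8\to1$), but the two ends $5$ and $10$ of the top strand lie only on boundaries of bounded complementary regions: any planar path from either end to the unbounded region must cross the loop $\{4,7\}$ or $\{1,8\}$, and in fact ends $5$ and $10$ do not even lie on the boundary of a common region, so no crossing-free rerouting exists through any region. The bottom strand is likewise trapped at its end $6$. What extremality in $y$ really buys you is only a $3$-dimensional statement: you may lift the top strand and sweep it anywhere \emph{over} the diagram, but when you project back down the rerouted arc crosses the loops that trapped it, so the result is no longer an \"ubercrossing projection and your induction cannot restart.

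The paper avoids this global accessibility problem by making its reduction local. After the coherence observation, it replaces the multi-crossing by a half-twist on $n$ strands and analyzes the crossingless closures of a half-twist: a rainbow closure has more than one component whenever $n>2$ (so it cannot occur for a knot), and any other closure joins two strands that are \emph{adjacent} at the half-twist, which allows those two strands to be cancelled, passing to a half-twist on $n-2$ strands, possibly splitting off an unlinked unknot. Some innermost closure arc always joins adjacent strands, regardless of nesting, which is why that induction goes through exactly where your sweep gets stuck. If you wish to salvage a one-strand-at-a-time argument, you would need to enlarge the class of diagrams in the induction (for instance, \"ubercrossing diagrams together with extra arcs lying entirely above the rest, in the spirit of descending-diagram arguments), which is a genuinely different and longer proof.
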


\begin{proof}

Since the slope of a curve in the front projection of a Legendrian link determines the $y$-coordinate, any \"ubercrossing projection in the front projection uniquely determines a Legendrian link. We will first prove by induction on $n$ that any $n$-strand multi-crossing can be ``spread out'' into a half-twist of $n$-strands.

This is clearly true for our base case of $n=2$, as this is regular crossing. Suppose a $k$-strand multi-crossing can be isotoped to a half-twist on $k$ strands. We label these strands from top to bottom on the left by $1,\ldots,k$ as in Figure~\ref{sl}. Then, if we take a half-twist on $k+1$ strands, we can isotope the twist of the first $k$ strands into a single multi-crossing, as shown in Figure~\ref{ht}. Then we can isotope the final strand to give a multi-crossing of $k+1$ strands.

\begin{figure}
    \centering
    \includegraphics[width=1.5in]{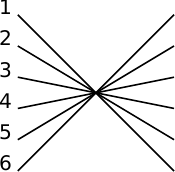}
    \caption{Labeling of a 6-strand multi-crossing.}
    \label{sl}
\end{figure}

\begin{figure}
    \centering
    \includegraphics[width=5.5in]{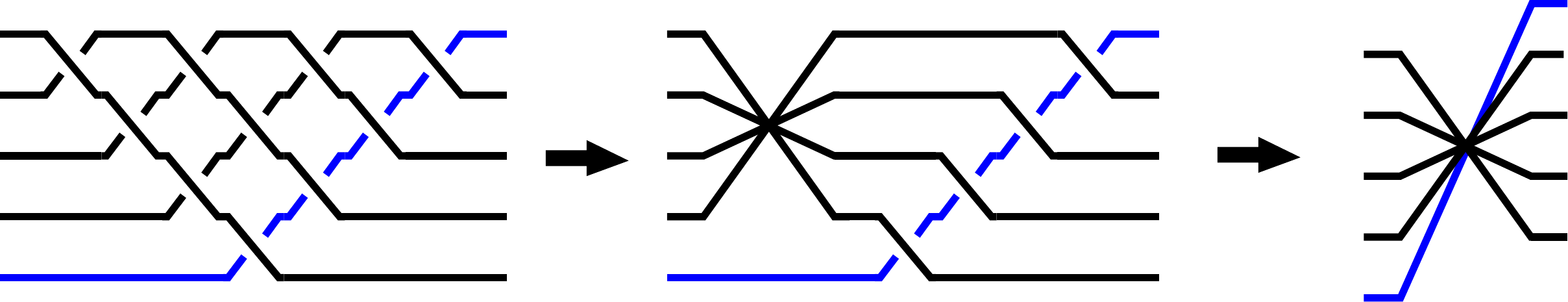}
    \caption{An isotopy showing that if a half-twist on 4 strands can be isotoped to a single multi-crossing, then so can a half-twist on 5 strands.}
    \label{ht}
\end{figure}

We now show that any way we close a single $n$-strand half-twist (without adding additional crossings) results in an unknot or a multiple component link. First, suppose we close the half-twist using a rainbow closure. Clearly, if $n=1$ or $n=2$, then the result is smoothly isotopic to the unknot. Note that a half twist must send the top strand to the bottom strand and vice versa, meaning the top and bottom strand of the half twist get closed into a single component in a rainbow closure. Therefore, if $n>2$ the rainbow closure has multiple components.

Finally, if our closure is not a rainbow closure, then there must be two adjacent strands of the half twist which are joined together. In this case, we can isotope the knot to a closure of a half twist on 2 fewer strands, possibly with an extra unlinked unknot, as shown in Figure~\ref{rr}.
\end{proof}
\begin{figure}
    \centering
    \includegraphics[width=4.5in]{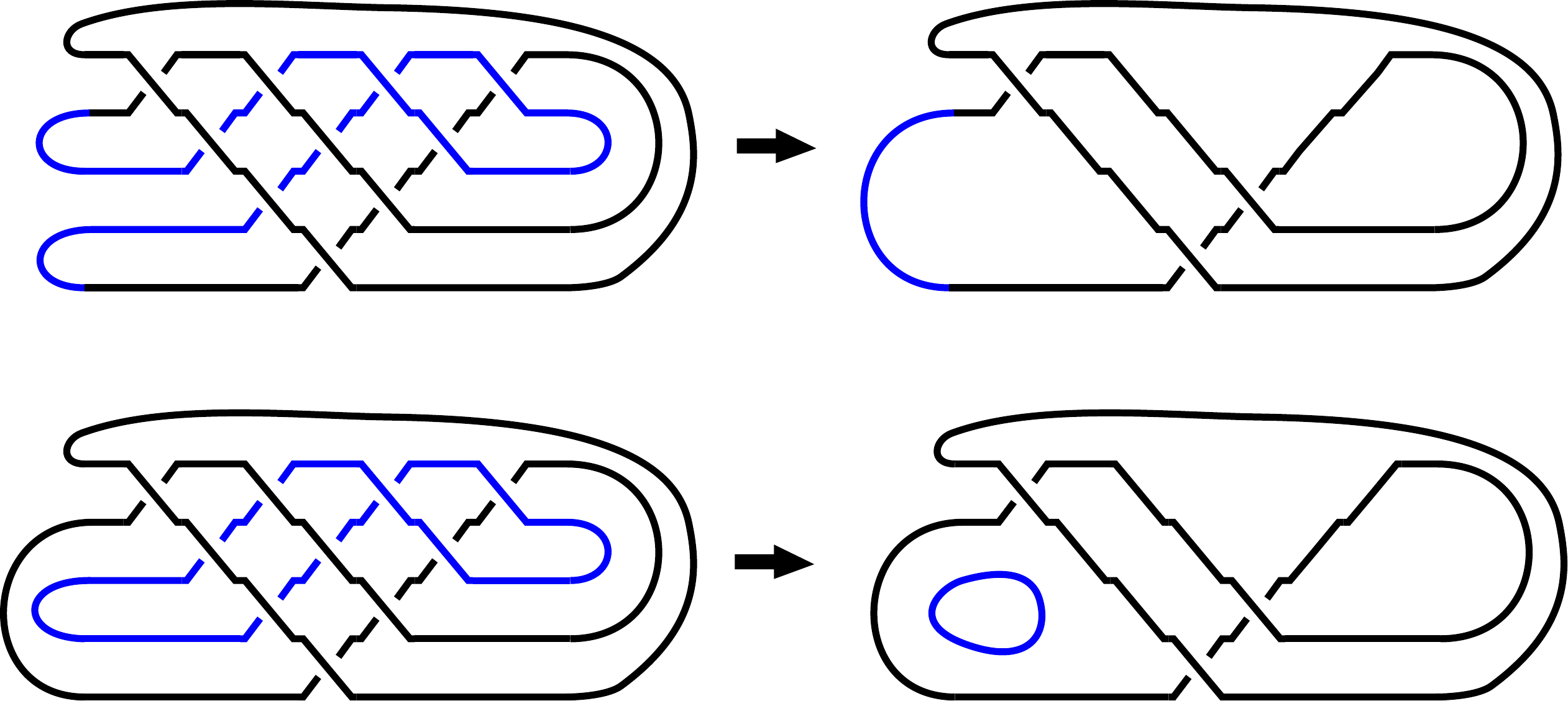}
    \caption{The top is an example of when a non-rainbow closure of a half twist can be isotoped to a closure of a half-twist on fewer stands. The bottom shows an example of when a non-rainbow closure of a half-twist can be isotoped to a unlinked union of an unknot and a closure of a half-twist on fewer strands.}
    \label{rr}
\end{figure}

\section{Lagrangian Petal Projections}

In this section, we define petal projections for Legendrian knots in the Lagrangian projection and use these to compute the rotation and Thurston-Bennequin numbers.

\begin{Definition}

A \textit{Lagrangian petal projection} of a Legendrian knot is a petal projection in the Lagrangian projection with possible half-twists on the petals, such as in Figure~\ref{petal}. Note that if the petal projection of a knot has at least two petals, then there must be an odd number of petals, as otherwise the projection would have multiple components.

\end{Definition}

Since we are able to recover the relative $z$-coordinates of points of our knot from the Lagrangian projection, a Lagrangian petal projection is a projection of a unique knot. Therefore, it is often useful to refer to these petal projections by the order of heights of the strands in the multi-crossing.

\begin{Definition}\label{height}

For an $n$ strand multi crossing, we label the strands by $1,2,\ldots,n$ as shown in Figure~\ref{sl}. We define a height function $h:\{1,2,\ldots,n\}\to\{1,2,\ldots,n\}$, where $h(i)=k$ if the $i$th strand is the $k$th highest strand at the multi-crossing. We refer to a given Lagrangian petal projection with height function $h$ as an $(h(1),\ldots,h(n))$-petal projection.

\end{Definition}

\begin{figure}
    \centering
    \includegraphics[width=1.4in]{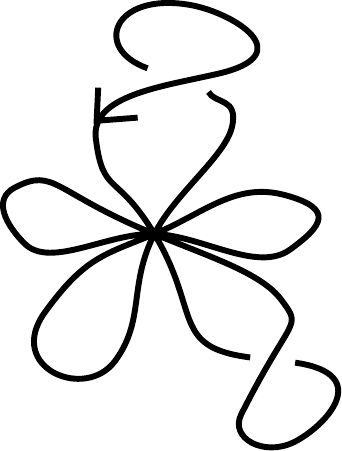}
    \caption{An example of a Lagrangian petal projection.}
    \label{petal}
\end{figure}

\begin{Theorem}
For  a smooth knot K there exists a Legendrian knot $\Lambda$ smoothly isotopic to $K$ such that $\Lambda$ has a Lagrangian petal projection.
\end{Theorem}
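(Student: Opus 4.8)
The plan is to start from a smooth petal projection of $K$, which exists by the Corollary to the theorem of \cite{adams2015knot}, and to realize the same flower in the $xy$-plane as the Lagrangian projection of a Legendrian knot whose recovered heights reproduce the over/under data of that petal projection. Recall that a smooth petal projection of $K$ is determined by its flower---a fixed cyclic connection pattern of $p=2m+1$ petals meeting at a single $p$-crossing---together with a height function $h$ on the $p$ strands, and that every knot type arises this way for a suitable $p$ and $h$. Since in the Lagrangian projection the relative $z$-coordinates are recovered by integrating $y\,dx$, it suffices to draw a flower with the standard connection pattern in the $xy$-plane whose induced heights at the central multi-crossing realize the prescribed permutation $h$; the Legendrian lift furnished by this projection then has smooth knot type $K$.

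The key observation is that the induced heights are governed by the areas of the petals. Label the $p$ passes through the center $1,\dots,p$ in the order they are traversed, and let
\[
A_k=\int_{\text{petal }k} y\,dx
\]
be the signed area swept out while traversing the $k$th petal, that is, between pass $k$ and pass $k+1$. Setting $z_1=0$, the height at the $k$th pass is the partial sum $z_k=\sum_{j<k}A_j$, and since the curve closes up we have $z_{p+1}=z_1$, so that $\sum_{k=1}^{p}A_k=0$ holds automatically. Thus prescribing the relative heights $z_1,\dots,z_p$ is the same as prescribing the consecutive differences $A_k=z_{k+1}-z_k$, and no separate closure condition on $\oint y\,dx$ needs to be imposed. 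Because $h$ is a genuine permutation the heights $z_k$ are distinct, so the Legendrian lift is embedded, with its only crossings occurring at the central multi-crossing, together with the self-crossings of any half-twisted petals, whose two strands again differ in height.

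It remains to realize, for each $k$, a petal whose signed area $A_k$ equals the prescribed real number $z_{k+1}-z_k$ while keeping the diagram a valid petal projection. I would control the magnitude of $A_k$ by scaling the size of the $k$th petal, and I would control its sign using a half-twist: a petal traversed in its natural direction around the center encloses area of a fixed sign, but inserting a half-twist turns it into a figure-eight whose two lobes contribute with opposite signs, so that the net signed area can be made negative, or, by adjusting the relative lobe sizes, any prescribed value. The hard part will be carrying this out uniformly---showing that each petal can independently attain an arbitrary signed area without the petals nesting or introducing a second multi-crossing, and verifying that the half-twist construction remains compatible with the fixed cyclic connection pattern of the flower. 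Once this is established, translating the target permutation $h$ (a spatial labeling of the strands at the center) into the corresponding traversal-order height sequence $z_1,\dots,z_p$ is a fixed combinatorial relabeling, and choosing the petal areas $A_k=z_{k+1}-z_k$ accordingly produces the desired Lagrangian petal projection of a Legendrian knot $\Lambda$ smoothly isotopic to $K$.
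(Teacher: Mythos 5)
Your proposal is correct and follows essentially the same route as the paper: both start from a smooth petal projection of $K$ with its height function $h$ and realize the prescribed heights in the Lagrangian projection by controlling the signed areas of the petals, inserting half-twists (R1 moves) exactly where a sign change is needed, with the closure condition holding automatically because the prescribed area increments are differences of a cyclic height sequence. The step you flag as the remaining ``hard part'' is precisely what the paper handles by isotoping each petal to bound area $|d|$, or lobes of area $1$ and $1+d$ across a half-twist (Figure~\ref{AddTwist}), so you are on the same track.
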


\begin{proof}

For a smooth knot $K$ there exists a petal projection of $K$. Suppose the petal projection has an $n$-multi-crossing. Let $h$ be the height function of this multi-crossing defined as in Definition~\ref{height}. Then any $(h(1),\ldots,h(n))$-Lagrangian petal projection is a projection of a Legendrian knot which is smoothly isotopic to $K$.

We now show how to construct this $(h(1),\ldots,h(n))$-Lagrangian petal projection from the petal projection of $K$. Let the signed distance between two strands $i$ and $i+1$ be given by $d= h(i+1)- h(i)$. If $h(i) < h(i+1)$, isotope the petal connecting strands $i$ and $i+1$ to have an area of 1 and apply an R1 move to the petal such that a new negative crossing is created and the new disk has area $1+d$, as shown in Figure~\ref{AddTwist}. If $h(i) > h(i+1)$, we isotope the petal connecting strands $i$ and $i+1$ so that the petal bounds an area of $d$. We also do this for the petal containing strands $1$ and $n$.

This process results in an $(h(1),\ldots,h(n))$-Lagrangian petal projection and does not change the smooth knot type of $K$.
\begin{figure}
    \centering
    \includegraphics[width=3.6in]{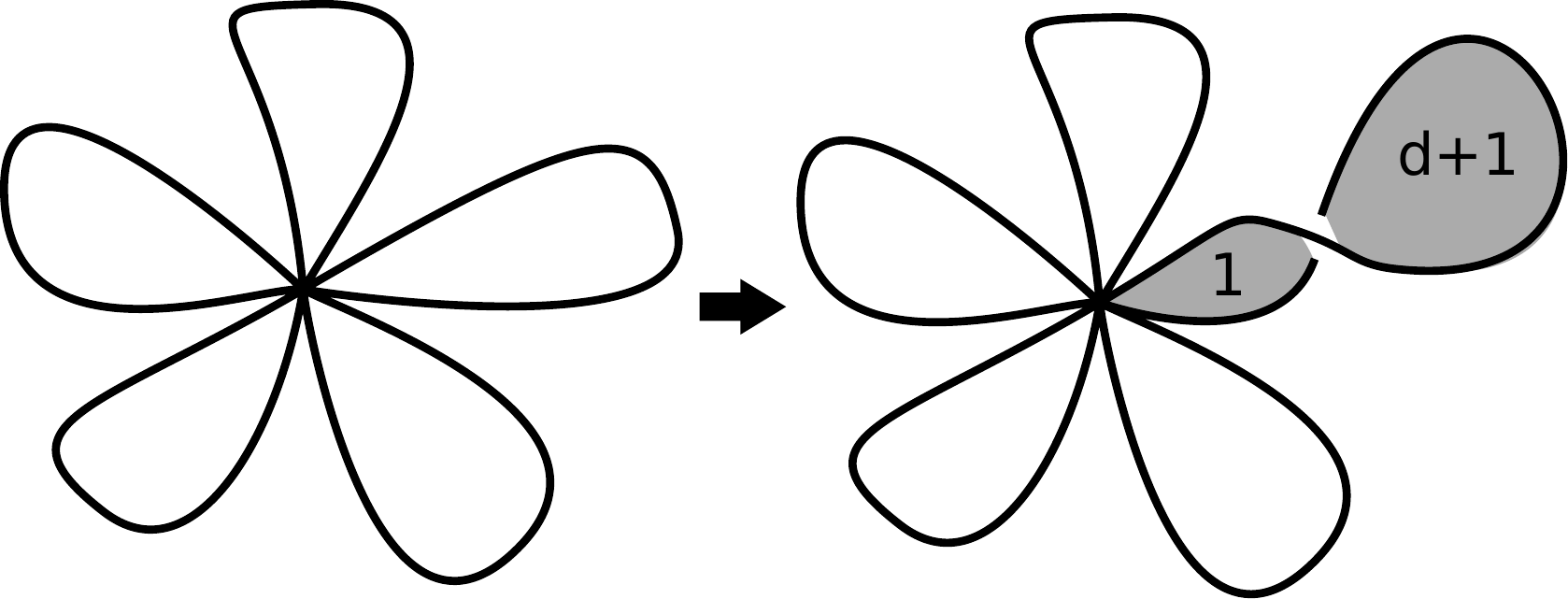}
    \caption{Applying an R1 move to a petal projection.}
    \label{AddTwist}
\end{figure}
\end{proof}

Next, we investigate how stabilizations effect Lagrangian petal projections.

\begin{Theorem}

If a Legendrian knot $\Lambda$ has a Legendrian petal projection, then any Legendrian knot $\Lambda'$ obtained from $\Lambda$ via stabilization also has a Legendrian petal projection.
    
\end{Theorem}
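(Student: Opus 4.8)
The plan is to realize a single Legendrian stabilization as the insertion of one half-twist on a petal, and then to iterate. Since the class of Lagrangian petal projections explicitly permits half-twists on petals, the real work is to confirm that the standard local model of stabilization in the Lagrangian projection is exactly such a half-twist, and that inserting it respects the remaining requirements of a petal projection: a single multi-crossing, no nested loops, and closure of the $z$-coordinate.

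First I would recall the local model of stabilization in the Lagrangian projection, namely that stabilizing $\Lambda$ amounts to replacing a short embedded arc of the projection by an arc carrying a single kink (one new self-crossing), i.e. an R1-type half-twist. There are two independent choices here, the sign of the new crossing and the orientation of the kink loop, and I would track their effect on the pair $(tb,\,r)$, where $r$ denotes the rotation number. A negative crossing decreases the writhe, hence $tb$, by one, and the two orientations of the loop change $r$ by $+1$ and by $-1$ respectively; these are precisely the positive and negative stabilizations. Thus for either stabilization there is a half-twist, which I would place on the outer portion of any chosen petal, where it can be made arbitrarily small and so meets no other strand, and whose insertion produces the Lagrangian diagram of $\Lambda'$ by the very definition of the stabilization operation.

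Next I would verify that the resulting diagram is still a Lagrangian petal projection. The new crossing lies on a single petal and is by definition an allowed half-twist rather than a forbidden nested loop; the central crossing is untouched and remains a genuine $n$-multi-crossing with each strand bisecting it; and the cyclic connectivity of the petals, hence the number of components, is unchanged. The one genuinely technical point is the closure condition $\oint y\,dx = 0$, which is what allows the $z$-coordinate to close up into an honest Legendrian knot: the kink encloses a small signed area that must be compensated. I would resolve this exactly as in the proof of the preceding theorem, by rescaling the areas bounded by the petals, choosing the compensating change small enough that the order of the strand heights at the central multi-crossing, and therefore the height function $h$, is unchanged. Because the combinatorial height data is preserved, this rescaling is a Legendrian isotopy and does not alter the knot beyond the intended stabilization.

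Finally, since a single stabilization of $\Lambda$ again admits a Lagrangian petal projection, the same argument applied repeatedly shows that any Legendrian knot obtained from $\Lambda$ by a sequence of stabilizations admits one as well, which is the claim. I expect the main obstacle to be the area and height bookkeeping of the third paragraph: one must check that the compensation needed to re-close the $z$-coordinate can always be performed without reordering the heights at the multi-crossing, since such a reordering would change the Legendrian type, and one must match the sign conventions carefully so that both the positive and the negative stabilization are genuinely realized rather than merely a knot sharing the same classical invariants $tb$ and $r$.
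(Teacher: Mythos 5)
Your proposal is correct and follows essentially the same route as the paper: the paper also realizes the stabilization as an R1 kink isotoped away from the multi-crossing and then rescales areas (giving the new loop area $1$ and enlarging the adjacent region from $A$ to $A+1$) so that the heights of the strands at the multi-crossing are preserved. Your additional attention to the closure condition, the sign/orientation bookkeeping distinguishing positive from negative stabilization, and the iteration step are refinements of, not departures from, the paper's argument.
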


\begin{proof}
We can isotope any stabilization of $\Lambda$ away from the multi-crossing. If we stabilize $\Lambda$ by applying an R1-move to a strand in the boundary of a region $R$ with area $A$, we can we can isotope the new region to have an area of 1 and $R$ to have an area of $A+1$. This new diagram is a valid Lagrangian petal projection for the stabilization of $\Lambda$ as it preserves the heights of the strands at the multi-crossing.
\end{proof}

Now we show how to compute the rotation number and $tb$ from Lagrangian petal projections. We say that an $(h(1),\ldots,h(n))$-Lagrangian petal projection is \textit{standard} if each petal has at most 1 twist and no bounded region is contained within another bounded region.

\begin{Theorem}

A Legendrian knot $\Lambda$ with a standard Lagrangian petal projection with $n$ petals and $k$ half twists oriented as in Figure~\ref{petal} has rotation number $\frac{n+1}{2}-k$.
    
\end{Theorem}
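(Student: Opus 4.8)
The plan is to recognize the rotation number as the winding number of the unit tangent vector $\Lambda'/\lvert\Lambda'\rvert$ as $\Lambda$ is traversed once in the Lagrangian projection, i.e.\ the rotation index (turning number) of the underlying immersed plane curve. Since the turning number is invariant under regular homotopy by the Whitney--Graustein theorem, I am free to compute it on any convenient immersed representative and to split it into local contributions: one coming from the untwisted daisy and one from each half-twist.

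First I would treat the untwisted case $k=0$. The hypothesis that the projection is standard---a single multi-crossing with no nested bounded regions---forces its shadow to be the usual $n$-petal daisy independent of the height function $h$, so its turning number depends only on $n$. I would compute this number by regularly homotoping the daisy to the rose curve $r=\cos(n\theta)$ (recall $n$ is odd whenever there are at least two petals) and evaluating its rotation index, which equals $\frac{n+1}{2}$. Equivalently, I would induct on the number of petals: the base case $n=1$ is a simple loop of turning number $1=\frac{1+1}{2}$, and the inductive step shows that inserting a pair of adjacent petals raises the turning number by exactly $1$. The content of that step, which I would verify by a local tangent-angle computation at the center, is that the two new loops contribute a net of one full turn rather than two, because rearranging the central strands to accommodate them rotates their directions so as to cancel one loop's worth of turning.

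Next I would account for the half-twists. As in the construction preceding Figure~\ref{AddTwist}, each half-twist is introduced by an R1 move that inserts a single small loop into a petal, and adding such a loop changes the turning number by $\pm 1$ according to the sense in which it curls. With every petal and twist oriented as in Figure~\ref{petal}, each half-twist is a negatively oriented kink and so lowers the turning number by exactly $1$. Because a standard projection has at most one twist per petal and no nesting, the $k$ kinks occupy disjoint regions and their contributions add independently to $-k$. Combining this with the untwisted count yields rotation number $\frac{n+1}{2}-k$.

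The step I expect to be the main obstacle is the untwisted count $\frac{n+1}{2}$: making the two-petal insertion rigorous---precisely tracking the full turn that is absorbed at the central multi-crossing---or, equivalently, carrying out the rose integral, all while fixing an orientation convention consistently so that petals contribute positively and twists negatively. A secondary point to nail down is that the shadow of every standard $n$-petal projection is indeed the daisy regardless of $h$, which is what makes the base count a function of $n$ alone.
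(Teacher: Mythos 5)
Your proposal is correct and takes essentially the same route as the paper's proof: identify the rotation number with the turning number (winding number of the tangent) of the projected curve, count $\frac{n+1}{2}$ for the untwisted $n$-petal daisy, and subtract $1$ for each half-twist kink. The only difference is that you supply justifications (Whitney--Graustein invariance and the rose-curve/inductive computation of the daisy's turning number) for steps the paper simply asserts.
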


\begin{proof}

The rotation number of a Legendrian knot in the Lagrangian projection is given by the winding number. The winding number of a petal projection with $n$ petals and no half-twists is $\frac{n+1}{2}$. Each half-twist on the outside of a petal will decrease the rotation number by 1. Therefore, the rotation number of the knot is $\frac{n+1}{2}-k$.
\end{proof}

Recall that computing the $tb$ is equivalent to computing the writhe from the Lagrangian projection. We define the following function to help compute the writhe from a Lagrangian petal projection.

Let $f:\{1,2,\ldots,n\}\times\{1,2,\ldots,n\}\to\ZZ_2$ be defined by $f(i,j)=i+j\ (\text{mod 2})$. Recall the height function $h$ defined in Definition~\ref{height}. We define a sign function $\sigma:\{i,j\mid 1\leq i<j\leq n\}\to\{-1,1\}$ by 

\begin{align}
\sigma(i,j) = \left\{ \begin{array}{cc} 
                1 & \hspace{5mm} h(i)>h(j)\ \text{and}\ f(i,j)=0,   \\
                1 & \hspace{5mm} h(i)<h(j)\ \text{and}\ f(i,j)=1,   \\
                -1 & \hspace{5mm} h(i)>h(j)\ \text{and}\ f(i,j)=1, \\
                -1 & \hspace{5mm} h(i)<h(j)\ \text{and}\ f(i,j)=0. \\
                \end{array} \right.
\end{align}

Any two strands in the multi-crossing cross each other exactly once. Therefore, when we spread out the multi-crossing to a typical link diagram, we will have exactly one crossing for each pair of strands. This sign function $\sigma$ computes the sign of each of these crossings. This gives us our next theorem.

\begin{Theorem}

For a Legendrian knot $\Lambda$ with a Lagrangian petal projection with $k$ half-twists and an $n$-multicrossing, the $tb$ of $\Lambda$ is given by

\[tb(\Lambda)=-k+\sum_{1\leq i<j\leq n}\sigma(i,j).\]
\end{Theorem}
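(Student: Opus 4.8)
The plan is to use the identity $tb(\Lambda)=\text{writhe}(\Lambda)$ and to compute the writhe as the sum of the signs of all crossings in the spread-out Lagrangian diagram. There are exactly two sources of crossings. First, resolving the single $n$-multi-crossing into an ordinary diagram produces one crossing for each unordered pair of strands $\{i,j\}$ with $i<j$, since any two strands at the multi-crossing meet exactly once. Second, each of the $k$ half-twists on the petals contributes a single crossing. So I would write
\[
\text{writhe}(\Lambda)=\sum_{1\le i<j\le n}s(i,j)+\sum_{\text{half-twists }t}s(t),
\]
where $s(\,\cdot\,)$ denotes the sign of a crossing, and then identify each summand.

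For the first sum, I would show that $s(i,j)=\sigma(i,j)$. The over/under information at the crossing is read off directly from the height function: strand $i$ passes over strand $j$ precisely when $h(i)>h(j)$, since $h$ records the relative $z$-coordinates in the Lagrangian projection. The remaining ingredient is the relative orientation of the two strands as they cross. Here I would use the petal structure: tracing the knot through the multi-crossing shows that two strands are cooriented or anti-cooriented at their crossing according to the parity of $i+j$, which is exactly the data encoded by $f(i,j)$. Combining the over/under datum with the coorientation datum via the standard crossing-sign convention reproduces the four cases in the definition of $\sigma$, so the first sum equals $\sum_{1\le i<j\le n}\sigma(i,j)$.

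For the second sum, I would check that a half-twist oriented as in Figure~\ref{petal} is a negative crossing; this is the same orientation convention already used to compute the rotation number, and it is consistent with the construction in the proof of Theorem~B, where each added R1 twist created a negative crossing. Hence each half-twist contributes $-1$, and the $k$ half-twists contribute $-k$ in total. Adding the two sums yields $tb(\Lambda)=-k+\sum_{1\le i<j\le n}\sigma(i,j)$.

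The main obstacle is the careful bookkeeping behind the claim that $f(i,j)$ records the relative orientation of strands $i$ and $j$ at their crossing. This requires following the global orientation of the knot as it alternately exits and returns through the multi-crossing along successive petals, and verifying that the induced sequence of orientations along the left-hand side of the multi-crossing alternates with parity. Once this orientation pattern is pinned down, matching it against the over/under data to obtain the precise signs in $\sigma$ is a routine case check, and the half-twist contribution is then immediate.
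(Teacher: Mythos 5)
Your proposal is correct and follows essentially the same route as the paper: compute $tb$ as the writhe, with each of the $k$ half-twists contributing $-1$ and each pair of strands contributing exactly one crossing of sign $\sigma(i,j)$ when the multi-crossing is spread out. The only difference is that you justify in more detail why $\sigma(i,j)$ is the correct crossing sign (over/under from $h$, relative orientation from the parity $f(i,j)$), a point the paper asserts without elaboration in the discussion preceding the theorem.
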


\begin{proof}
    When computing the writhe of $\lambda$ from the Lagrangian petal projection, each of the $k$ half twist contribute -1 to the total. If we ``spead" out the $n$-multi-crossing, strands $i$ and $j$ cross each other exactly once and this crossing contributes $\sigma(i,j)$ to the writhe.
\end{proof}

Let $\Lambda_n$ be the Legendrian knot with a standard $(1,\frac{n+3}{2},2,\frac{n+5}{2},\ldots,\frac{n-1}{2},n,\frac{n+1}{2})$-Lagrangian petal projection. This petal projection has exactly $\frac{n-1}{2}$ half twists since $h(i)>h(i+1)$ if and only if $i$ is even. In fact, if $i$ is even and $i<j$, then $h(i)>h(j)$ if and only if $j$ is also even. Therefore, for a fixed even $i$, we have \[\sum_{i<j\leq n} \sigma(i,j)=n-i.\] If $i$ is odd and $i<j$, then $h(i)<h(j)$, so for a fixed odd $i$, we have \[\sum_{i<j\leq n}\sigma(i,j)=0.\] Therefore, the knot given by this projection has \[tb=\left(\sum_{0\leq k\leq \frac{n-1}{2}} 2k+1 \right)-\frac{n-1}{2}=\left(\frac{n-1}{2}\right)^2-\frac{n-1}{2}.\]
This is a sequence of knots where $tb$ strictly increases as $n$ increases, which shows that we can find a Lagrangian petal projection for any given $tb$. We believe these knots to have the largest possible $tb$ for a Lagrangian petal projection with $n$ petals.

\begin{Theorem}\label{upperbound}

Any Legendrian knot $\Lambda$ with a Lagrangian petal projection with $n$ petals will have $tb(\Lambda)\leq\left(\frac{n-1}{2}\right)^2-1$.

\end{Theorem}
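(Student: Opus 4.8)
The plan is to start from the $tb$ formula established just above, $tb(\Lambda)=-k+\sum_{1\le i<j\le n}\sigma(i,j)$, and to bound the two contributions separately. Since $\Lambda$ is a single-component knot its $n$-multi-crossing has $n$ odd; write $m=\frac{n-1}{2}$, so that there are $\binom{n}{2}=2m^2+m$ pairs of strands. The height word $h(1),\dots,h(n)$, read cyclically, is a nonconstant cyclic sequence and hence has at least one ascent; since realizing an ascent of the height function in the Lagrangian projection forces a half-twist (exactly as in the construction proving the realization theorem above), we get $k\ge 1$. Thus it suffices to prove the purely combinatorial inequality $\sum_{1\le i<j\le n}\sigma(i,j)\le m^2$, since together with $k\ge 1$ this gives $tb(\Lambda)\le m^2-1=\left(\frac{n-1}{2}\right)^2-1$.

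First I would put $\sigma$ in closed form. Checking the four cases in its definition shows $\sigma(i,j)=(-1)^{i+j}\operatorname{sign}\!\big(h(j)-h(i)\big)$, so a crossing is positive exactly when $i,j$ have the same parity and $h(i)<h(j)$, or opposite parity and $h(i)>h(j)$. I would then split the pairs by the parities of $i$ and $j$. The same-parity positive crossings are the non-inversions of the height sequence restricted to the $m+1$ odd indices together with those restricted to the $m$ even indices, so their number is at most $\binom{m+1}{2}+\binom{m}{2}=m^2$. Writing $p$ for the total number of positive crossings, the target $\sum\sigma=2p-(2m^2+m)\le m^2$ is equivalent to $p\le\frac{3m^2+m}{2}$, i.e. to the assertion that every such projection has at least $\binom{m+1}{2}$ negative crossings.

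The step I expect to be the main obstacle is exactly this last count, because the same-parity and opposite-parity contributions cannot be optimized independently: forcing every same-parity crossing to be positive makes each parity class internally increasing, and then the triangle on indices $1,2,3$ (where $h(1)<h(3)$ is incompatible with $h(1)>h(2)>h(3)$) shows the opposite-parity crossings cannot all be made positive as well. Quantifying this trade-off to produce the sharp count of $\binom{m+1}{2}$ forced negative crossings is the heart of the argument. I would attack it by an extremal analysis of a height function maximizing $\sum\sigma$: either a carefully designed exchange argument that normalizes $h$ to a canonical interleaving of two monotone blocks (noting that naive adjacent transpositions need not increase $\sum\sigma$, so the exchange must be chosen with a global potential in mind), or an induction on $m$ that deletes two suitably chosen strands and bounds the $4m-1$ crossings they meet by $2m-1$. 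Once $\sum\sigma\le m^2$ is in hand, combining with $k\ge 1$ completes the proof, and the family $\Lambda_n$ shows the estimate is of the correct order of magnitude.
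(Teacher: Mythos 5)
Your reduction is set up correctly and matches the paper's frame: you use $tb(\Lambda)=-k+\sum_{1\le i<j\le n}\sigma(i,j)$, you justify $k\ge 1$ (the cyclic height sequence must have an ascent, and ascents force half-twists, exactly as in the realization construction — the paper asserts this without argument), and your arithmetic showing that $\sum\sigma\le m^2$ with $m=\frac{n-1}{2}$ is equivalent to forcing at least $\binom{m+1}{2}$ negative crossings is correct. But the proposal stops exactly where the proof has to start. You yourself identify the count of $\binom{m+1}{2}$ forced negative crossings as ``the heart of the argument,'' and then offer only two candidate strategies (an exchange argument with an unspecified global potential, or an induction on $m$ with an unproved deletion bound) without carrying either out. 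Nothing in the proposal establishes the bound on $\sum\sigma$ for an arbitrary height function, so this is a genuine gap, not a routine detail: as your own remark about non-monotone adjacent transpositions concedes, the extremal analysis you gesture at is precisely the hard part.

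The paper closes this gap with a local pigeonhole argument rather than a global extremal one, and this is the idea missing from your proposal. One shows that for $j>i+1$ it is impossible to have $\sigma(i,j)=\sigma(i+1,j)=\sigma(i,j+1)=\sigma(i+1,j+1)=1$: for instance, when $i$ and $j$ are both even, the first two equalities force $h(i)>h(j)>h(i+1)$ while the last two force $h(i+1)>h(j+1)>h(i)$, a contradiction (the other parity cases are analogous). Hence every $2\times 2$ block of pairs $\{i,i+1\}\times\{j,j+1\}$ contains at least one negative crossing; blocks built from disjoint row-pairs and column-pairs are vertex-disjoint, so each contributes a distinct $-1$ to the sum, and together with the normalization $h(1)=1$ (under which the pairs $(1,j)$ contribute zero) this yields the needed supply of negative crossings with no analysis of a maximizing permutation at all. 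If you want to complete your write-up, proving this ``no all-positive block'' lemma and exhibiting enough disjoint blocks is the concrete missing step. A smaller point: your closed form for $\sigma$ has a sign error — the paper's definition gives $\sigma(i,j)=(-1)^{i+j+1}\operatorname{sign}\bigl(h(j)-h(i)\bigr)$, i.e.\ same-parity crossings are positive when $h(i)>h(j)$, not when $h(i)<h(j)$. This happens to be harmless for your trivial bound on same-parity pairs (the involution $h\mapsto n+1-h$ interchanges the two conventions), but it would corrupt any exchange or induction argument that depends on which pairs are actually inversions.
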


\begin{proof}

To prove this, we will show that the largest $\sum_{1\leq i<j\leq n}\sigma(i,j)$ can be is $(n-1)^2$. First, note that we can rotate any Lagrangian petal projection so that $h(1)=1$. In this case, $\sigma(1,i)$ alternates between -1 and 1. Therefore, we have that \[\sum_{1\leq i<j\leq n}\sigma(i,j)=\sum_{2\leq i<j\leq n}\sigma(i,j).\]

Next, we show that it is impossible for \[\sigma(i,j)=\sigma(i+1,j)=\sigma(i,j+1)=\sigma(i+1,j+1)=1\] for any $j>i+1$. Suppose the contrary. We further assume that both $i$ and $j$ are even, since other cases follow similarly. If $\sigma(i,j)=\sigma(i+1,j)=1$, then $h(i)>h(j)>h(i+1)$. If $\sigma(i,j+1)=\sigma(i+1,j+1)=1$, then $h(i+1)>h(j+1)>h(i)$, which is a contradiction. Therefore, at least one of $\sigma(i,j)$, $\sigma(i+1,j)$, $\sigma(i,j+1),$ $\sigma(i+1,j+1)$ is negative.

Consider squares containing vertices $(i,j)$, $(i+1,j)$, $(i,j-1)$, and $(i+1,j-1)$ for even $i$ and $j$ with $i<j\leq n$. Each of these squares must have at least one vertex where $\sigma$ has a value of -1. Since the number of these squares is \[\sum_{k=1}^{\frac{n-1}{2}}k=\frac{(n+1)(n-1)}{8},\]
we know that at least $\frac{(n+1)(n-1)}{8}$ of the summands of $\sum_{2\leq i<j\leq n}\sigma(i,j)$ are $-1$. Therefore, we have the inequality
\[ \sum_{2\leq i<j\leq n}\sigma(i,j) \leq \frac{n(n-1)}{2} - 2\frac{(n+1)(n-1)}{8} = \left(\frac{n-1}{2}\right)^2.\]

Since any Lagrangian petal projection must have at least one half twist, we get the desired inequality $tb(\Lambda)\leq\left(\frac{n-1}{2}\right)^2-1$.
\end{proof}

\subsection{Open Problems}

Here are some problems we have considered, but have not answered. First, while any \"ubercrossing projection of a Legendrian knot in the front projection is trivial, is there anything we can say about Legendrian knots in the front projection with exactly 2 multi-crossings? With $n$ multi-crossings? Second, can we improve our upper bound on $tb$ from Theorem~\ref{upperbound}? We believe that the $(1,\frac{n+3}{2},2,\frac{n+5}{2},\ldots,\frac{n-1}{2},n,\frac{n+1}{2})$ Lagrangian petal projection attains the highest $tb$ for a projection with $n$ petals and that it is unique. These knots achieve the largest possible writhe for such a Lagrangian petal projection, so any knot with a larger $tb$ would need fewer than $\frac{n-1}{2}$ half-twists. Finally, is there a Lagrangian petal projection for every max $tb$ Legendrian knot? In other words, does every Legendrian knot have a Lagrangian petal projection?

\bibliography{Bibliography}

\newcommand{\etalchar}[1]{$^{#1}$}
\begin{thebibliography}{AEZG{\etalchar{+}}23}

\bibitem[ACD{\etalchar{+}}15]{adams2015knot}
Colin Adams, Thomas Crawford, Benjamin DeMeo, Michael Landry, Alex~Tong Lin, MurphyKate Montee, Seojung Park, Saraswathi Venkatesh, and Farrah Yhee.
\newblock Knot projections with a single multi-crossing.
\newblock {\em Journal of Knot Theory and Its Ramifications}, 24(03), 2015.

\bibitem[ACSF{\etalchar{+}}15]{MR3334663}
Colin Adams, Orsola Capovilla-Searle, Jesse Freeman, Daniel Irvine, Samantha Petti, Daniel Vitek, Ashley Weber, and Sicong Zhang.
\newblock Bounds on \"{u}bercrossing and petal numbers for knots.
\newblock {\em J. Knot Theory Ramifications}, 24(2):1550012, 16, 2015.
\newblock \href {https://doi.org/10.1142/S0218216515500121} {\path{doi:10.1142/S0218216515500121}}.

\bibitem[ACSF{\etalchar{+}}18]{MR3733244}
Colin Adams, Orsola Capovilla-Searle, Jesse Freeman, Daniel Irvine, Samantha Petti, Daniel Vitek, Ashley Weber, and Sicong Zhang.
\newblock Multi-crossing number for knots and the {K}auffman bracket polynomial.
\newblock {\em Math. Proc. Cambridge Philos. Soc.}, 164(1):147--178, 2018.
\newblock \href {https://doi.org/10.1017/S0305004116000906} {\path{doi:10.1017/S0305004116000906}}.

\bibitem[Ada13]{adams2013triple}
Colin Adams.
\newblock Triple crossing number of knots and links.
\newblock {\em Journal of Knot Theory and Its Ramifications}, 22(02):1350006, 2013.

\bibitem[AEZG{\etalchar{+}}23]{MR4641887}
Colin Adams, Chaim Even-Zohar, Jonah Greenberg, Reuben Kaufman, David Lee, Darin Li, Dustin Ping, Theodore Sandstrom, and Xiwen Wang.
\newblock Virtual multicrossings and petal diagrams for virtual knots and links.
\newblock {\em J. Knot Theory Ramifications}, 32(8):Paper No. 2340001, 19, 2023.
\newblock \href {https://doi.org/10.1142/S0218216523400011} {\path{doi:10.1142/S0218216523400011}}.

\bibitem[AHP19]{MR4038326}
Colin Adams, Jim Hoste, and Martin Palmer.
\newblock Triple-crossing number and moves on triple-crossing link diagrams.
\newblock {\em J. Knot Theory Ramifications}, 28(11):1940001, 19, 2019.
\newblock \href {https://doi.org/10.1142/S0218216519400017} {\path{doi:10.1142/S0218216519400017}}.

\bibitem[AK20]{MR4124135}
Colin Adams and Gregory Kehne.
\newblock Bipyramid decompositions of multicrossing link complements.
\newblock {\em Comm. Anal. Geom.}, 28(3):499--518, 2020.
\newblock \href {https://doi.org/10.4310/CAG.2020.v28.n3.a1} {\path{doi:10.4310/CAG.2020.v28.n3.a1}}.

\bibitem[CGHS19]{MR4012239}
Leslie Colton, Cory Glover, Mark Hughes, and Samantha Sandberg.
\newblock A {R}eidemeister type theorem for petal diagrams of knots.
\newblock {\em Topology Appl.}, 267:106896, 22, 2019.
\newblock \href {https://doi.org/10.1016/j.topol.2019.106896} {\path{doi:10.1016/j.topol.2019.106896}}.

\bibitem[Com09]{pict}
Wikimedia Commons.
\newblock The standard contact structure on $r^3$ as the kernel of $dz - ydx$, 2009.
\newblock URL: \url{https://en.wikipedia.org/wiki/File:Standard_contact_structure.svg}.

\bibitem[Etn05]{etnyre2005legendrian}
John~B Etnyre.
\newblock Legendrian and transversal knots.
\newblock In {\em Handbook of knot theory}, pages 105--185. Elsevier, 2005.

\bibitem[EZHLN18]{MR3868230}
Chaim Even-Zohar, Joel Hass, Nathan Linial, and Tahl Nowik.
\newblock The distribution of knots in the {P}etaluma model.
\newblock {\em Algebr. Geom. Topol.}, 18(6):3647--3667, 2018.
\newblock \href {https://doi.org/10.2140/agt.2018.18.3647} {\path{doi:10.2140/agt.2018.18.3647}}.

\bibitem[Gei08]{geiges2008introduction}
Hansj{\"o}rg Geiges.
\newblock {\em An introduction to contact topology}, volume 109.
\newblock Cambridge University Press, 2008.

\bibitem[HT22]{MR4462153}
Allison Henrich and Robin Truax.
\newblock Petal projections, knot colorings and determinants.
\newblock {\em Involve}, 15(2):207--232, 2022.
\newblock \href {https://doi.org/10.2140/involve.2022.15.207} {\path{doi:10.2140/involve.2022.15.207}}.

\bibitem[KNY22]{MR4523300}
Hyoungjun Kim, Sungjong No, and Hyungkee Yoo.
\newblock Petal number of torus knots using superbridge indices.
\newblock {\em J. Knot Theory Ramifications}, 31(13):Paper No. 2250096, 14, 2022.
\newblock \href {https://doi.org/10.1142/s0218216522500961} {\path{doi:10.1142/s0218216522500961}}.

\bibitem[LJ23]{MR4641892}
Hwa~Jeong Lee and Gyo~Taek Jin.
\newblock Petal number of torus knots of type {$(r,r+2)$}.
\newblock {\em J. Knot Theory Ramifications}, 32(8):Paper No. 2340007, 8, 2023.
\newblock \href {https://doi.org/10.1142/S0218216523400072} {\path{doi:10.1142/S0218216523400072}}.

\end{thebibliography}

\bibliographystyle{alphaurl}

\end{document}